\documentclass[12pt,reqno]{amsart}

\usepackage{amsmath,amssymb,amsfonts}
\usepackage[paperwidth=8.26in,paperheight=11.69in,
left=1in, right=1in, bottom=1.1in]{geometry}

\usepackage{xcolor,cancel}

\usepackage{thmtools}
\declaretheoremstyle[
headfont=\normalfont\bfseries,
headindent= 0pt, 
bodyfont=\em,
spaceabove=8pt,
spacebelow=8pt
]{thm}
\declaretheoremstyle[
headfont=\normalfont\em,
headindent= 0pt, 
spaceabove=8pt,
spacebelow=8pt
]{remark}
\declaretheoremstyle[
headfont=\normalfont\bfseries,
headindent= 0pt, 
spaceabove=8pt,
spacebelow=8pt
]{example}
\declaretheoremstyle[
headfont=\normalfont\bfseries,
headindent= 0pt, 
spaceabove=8pt,
spacebelow=8pt
]{definition}
\declaretheorem[name=Theorem,style=thm,numberwithin=section,
]{thm}
\declaretheorem[name=Proposition,style=thm,sibling=thm]{prop}
\declaretheorem[name=Lemma,style=thm,sibling=thm]{lem}

\declaretheorem[name=Definition,style=thm,sibling=thm]{defn}

\usepackage{cleveref}
\crefname{thm}{Theorem}{Theorems}
\crefname{prop}{Proposition}{Propositions}
\crefname{lem}{Lemma}{Lemmas}
\crefname{cor}{Corollary}{Corollaries}
\crefname{example}{Example}{Examples}
\crefname{defn}{Definition}{Definitions}
\crefname{rem}{Remark}{Remarks}

\crefname{enumi}{}{}
\crefname{enumii}{}{}
\crefname{enumiii}{}{}
\crefname{equation}{}{}

\numberwithin{equation}{section}

\newcommand{\ob}{\bar{1}}

\renewcommand{\Re}{\operatorname{Re}}
\renewcommand{\Im}{\operatorname{Im}}

\begin{document}
	
\title[An upper bound for the first positive eigenvalue of Kohn Laplacians]{An upper bound for the first positive eigenvalue of the Kohn Laplacian on Reinhardt real hypersurfaces}

\author{Gian Maria Dall'Ara}
\address{Istituto Nazionale di Alta Matematica ``F. Severi"\\ Research Unit Scuola Normale Superiore\\
	Piazza dei Cavalieri, 7, 56126 Pisa (Italy)}
\email{dallara@altamatematica.it}

\author{Duong Ngoc Son}
\address{Fakult\"at f\"ur Mathematik, Universit\"at Wien, Oskar-Morgenstern-Platz 1, 1090 Wien (Austria)}
\email{son.duong@univie.ac.at}

\thanks{This project was begun while the first-named author was a Marie Sk\l odowska-Curie Research Fellow at the University of Birmingham. He gratefully
	acknowledges the support of the European Commission via the Marie Sk\l odowska-Curie Individual Fellowship ``Harmonic Analysis on Real Hypersurfaces in Complex Space" (ID 841094). The second-named author was supported by the Austrian Science Fund (FWF): Projekt I4557-N}

\begin{abstract}
A real hypersurface in $\mathbb{C}^2$ is said to be Reinhardt if it is invariant under the standard $\mathbb{T}^2$-action on $\mathbb{C}^2$. Its CR geometry can be described in terms of the curvature function of its ``generating curve'', i.e., the logarithmic image of the hypersurface in the plane $\mathbb{R}^2$. We give a sharp upper bound for the first positive eigenvalue of the Kohn Laplacian associated to a natural pseudohermitian structure on a compact and strictly pseudoconvex Reinhardt real hypersurface having closed generating curve (which amounts to the $\mathbb{T}^2$-action being free). Our bound is expressed in terms of the $L^2$-norm of the curvature function of the generating curve and is attained if and only if the curve is a circle.
\end{abstract}

\date{Oct 13, 2021}
\maketitle

\section{Introduction}

Let $(M,\theta)$ be a compact strictly pseudoconvex pseudohermitian manifold. The associated Kohn Laplacian $\Box_b$ is a nonnegative self-adjoint second-order operator whose spectrum $\mathrm{Spec}(\Box_b)$ in $(0,+\infty)$ consists of discrete eigenvalues of finite multiplicity, while zero is an isolated eigenvalue if and only if $M$ is embeddable into some complex space. The spectrum $\mathrm{Spec}(\Box_b)$ in general, and estimates for the first positive eigenvalue (in the embeddable case) in particular, have been studied in several papers. For example, in \cite{chanillo2012embeddability} a Lichnerowicz-type estimate for the first positive eigenvalue in terms of the Webster scalar curvature (or Ricci curvature) was established, while the characterization of the equality case in this estimate was treated in \cite{li2015new} and \cite{case2021lichnerowicz}, for the higher-dimensional and the three-dimensional cases respectively. In \cite{li2018sharp}, several upper bounds for the first positive eigenvalue of the Kohn Laplacian on the boundaries of certain domains in $\mathbb{C}^2$ were obtained; the bounds are extrinsic, depending on the realization of the CR manifolds as embedded compact real hypersurfaces in $\mathbb{C}^2$. All of the aforementioned bounds are sharp, with equality holding if $(M,\theta)$ is isomorphic to the standard CR sphere. \medskip 

The main purpose of this paper is to provide an upper estimate for the first positive eigenvalue of the Kohn Laplacian associated to a natural choice of pseudohermitian structure on Reinhardt real hypersurfaces in $\mathbb{C}^2$. A real hypersurface $M\subset \mathbb{C}^2_{z,w}$ is said to be Reinhardt if it is invariant under the standard action of the $2$-dimensional torus $\mathbb{T}^2$ on $\mathbb{C}^2$ (here $\mathbb{T}:=\mathbb{R}/2\pi\mathbb{Z}$). Under this assumption, the open subset $M^{\ast}: = \{(z,w) \in M \mid zw \ne 0\}$ can be recovered from its image $\gamma\subset\mathbb{R}^2$ under the mapping $(z,w)\mapsto (\xi, \eta)=(\log|z|, \log|w|)$. We call $\gamma$ the ``generating curve" of $M$. (It may be of interest to remark that, by a result of Sunada \cite{sunada1978holomorphic}, two bounded Reinhardt domains in $\mathbb{C}^2$ are biholomorphically equivalent if and only if the generating curves of their boundaries are rigid equivalent.)

If $\gamma(s) = (\xi(s),\eta(s))$ is an arc-length parametrization of the generating curve $\gamma$ (the symbol $s$ will always denote the arc-length parameter), then $M^{\ast}$ is parametrized by
\begin{equation}\label{parametrization}
	(s, x ,y) \mapsto (e^{\xi(s)+ix }, e^{\eta(s) + iy}) \in \mathbb{C}^2,
\end{equation} 
	where $i = \sqrt{-1}$ and $x,y\in \mathbb{T}$. Notice that $M^\ast$ is diffeomorphic to $\gamma\times \mathbb{T}^2$, and that $M=M^\ast$ if and only if the induced $\mathbb{T}^2$-action on $M$ is free. In this paper, we work under the following assumptions on the Reinhardt hypersurface $M\subset\mathbb{C}^2$:\begin{equation}\label{assumptions}
	M \ \text{is compact, connected, and}\ M=M^\ast.
	\end{equation} 
In this case, the generating curve is a simple closed curve, and hence $M$ is diffeomorphic to a $3$-dimensional torus.
	
\begin{defn}\label{closed_defn} We say that a Reinhardt hypersurface $M$ satisfying \eqref{assumptions} has a ``closed generating curve".	
\end{defn}

So, although the sphere is Reinhardt, the hypersurfaces of interest here are \textit{not} diffeomorphic to the sphere. By \cite{li2015new} and \cite{case2021lichnerowicz}, the aforementioned estimates for the first eigenvalue of the Kohn Laplacian are not sharp for manifolds satisfying Definition \ref{closed_defn}, and this motivates the present paper.\medskip

Let $M$ be a strictly pseudoconvex Reinhardt hypersurface with closed generating curve $\gamma(s)=(\xi(s), \eta(s))$, and let $\kappa = \kappa(s)$ be the curvature function of $\gamma$. Using the parametrization \eqref{parametrization}, the function $\kappa$ may be lifted to a $\mathbb{T}^2$-invariant function on $M$. Since $M$ is strictly pseudoconvex, we can choose an orientation of $\gamma$ such that
$\kappa(s) > 0$. A natural choice of a pseudohermitian structure $\theta$ is then (following \cite{burns1988global})
\begin{equation} \label{e:12}
\theta = \eta{'} \, dx -\xi{'} dy, 
\end{equation} 
whose associated volume form satisfies $dV:=\theta \wedge d\theta = \kappa(s) ds\wedge dx \wedge dy$. We call \eqref{e:12} the ``normalized pseudohermitian structure" on $M$. As is well-known, $\theta$ allows to define an associated Kohn Laplacian $\Box_b$ (acting on functions). Denote by $\lambda_1(\Box_b)$ its first positive eigenvalue, which is well-defined since $M$ is strictly pseudoconvex and embedded (cf. \cite{kohn1986range}).

Our main result is
\begin{thm}\label{thm:1}
	Let $M\subset \mathbb{C}^2$ be a strictly pseudoconvex Reinhardt hypersurface with closed generating curve	$\gamma$. Let $\kappa(s) > 0$, $0\leq s < \ell$, be the curvature of $\gamma$ in an arc-length parametrization $s$, where $\ell$ is the length of $\gamma$. Let $\lambda_1(\Box_b)$ be the first positive eigenvalue of the Kohn Laplacian associated to the normalized pseudohermitian structure $\theta$ defined in \eqref{e:12}. Then
	\begin{equation}\label{e:13}
	\lambda_1(\Box_b) \leq \frac{1}{4\pi} \int_0^{\ell} \kappa^2 ds.
	\end{equation} 
	Equality occurs if and only if $\gamma$ is a circle, namely, if and only if
	\[M:=\left\{\left(\log a|z|\right)^2+\left(\log b|z|\right )^2=\frac{\ell^2}{4\pi^2}\right\}\] for some $a,b>0$.
\end{thm}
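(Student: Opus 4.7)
The plan is to invoke the Rayleigh-quotient characterization
\[
\lambda_1(\Box_b) = \inf \Bigl\{ \|\bar\partial_b f\|^2 / \|f\|^2 \;:\; 0 \neq f \in L^2(M),\ f \perp \ker\Box_b \Bigr\}
\]
and exhibit a single test function whose quotient equals $\tfrac{1}{4\pi}\int_0^\ell \kappa^2\,ds$. First I would describe the CR structure using the coordinates of \eqref{parametrization}: a direct computation shows that
\[
\bar L = \tfrac{1}{2}\partial_s + \tfrac{i}{2}\bigl(\xi'(s)\partial_x + \eta'(s)\partial_y\bigr)
\]
spans $T^{0,1}M$ and satisfies $-i\,d\theta(L,\bar L) = \kappa/2$, so that $|\bar\partial_b f|^2 = (2/\kappa)|\bar L f|^2$. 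The $\mathbb{T}^2$-action on $M$ allows the Fourier decomposition $f = \sum_{(m,n)\in\mathbb{Z}^2} u_{m,n}(s)\,e^{i(mx+ny)}$, and solving $\bar L\bigl(u(s)\, e^{imx+iny}\bigr) = 0$ forces $u = C\,e^{m\xi + n\eta}$, identifying the CR functions (mode by mode) with the Laurent monomials $z^m w^n$, all well-defined on $M^\ast = M$. In particular, a test function supported in the $(0,0)$ mode lies in $(\ker\Box_b)^\perp$ if and only if it has zero $L^2$-mean.

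Next, I would take as test function
\[
f(s) := e^{i\alpha(s)}, \qquad \alpha(s) := \int_0^s \kappa(t)\,dt,
\]
the complex exponential of the tangent angle of $\gamma$. Since $\gamma$ is simple and closed with $\kappa > 0$, Hopf's Umlaufsatz gives $\alpha(\ell) = 2\pi$, so $f$ descends to a smooth $\mathbb{T}^2$-invariant function on $M$. Orthogonality to constants then follows from the substitution $d\alpha = \kappa\,ds$:
\[
\int_0^\ell e^{i\alpha(s)}\kappa(s)\,ds = \int_0^{2\pi} e^{i\alpha}\,d\alpha = 0.
\]
Since $\bar L f = (i\kappa/2)\,e^{i\alpha}$, we obtain $|\bar\partial_b f|^2 = (2/\kappa)|\bar L f|^2 = \kappa/2$, whence
\[
\|\bar\partial_b f\|^2 = \tfrac{(2\pi)^2}{2}\int_0^\ell \kappa^2\,ds, \qquad \|f\|^2 = (2\pi)^2\int_0^\ell \kappa\,ds = (2\pi)^3,
\]
and the ratio is exactly $\tfrac{1}{4\pi}\int_0^\ell \kappa^2\,ds$, proving \eqref{e:13}.

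For the equality case: if the bound is attained, then $f = e^{i\alpha}$ is a minimizer of the Rayleigh quotient and hence a $\lambda_1$-eigenfunction of $\Box_b$. Restricted to zero-mode functions, integration by parts shows $\Box_b$ coincides with $-\tfrac{1}{2\kappa}\partial_s^2$ acting on $L^2(\kappa\,ds)$, so the eigenvalue equation reads $-u''=2\lambda \kappa u$. Applying this to $u = e^{i\alpha}$ (with $u' = i\kappa u$ and $u'' = (i\kappa' - \kappa^2)u$) yields $\kappa^2 - i\kappa' = 2\lambda \kappa$; the imaginary part forces $\kappa' \equiv 0$, so $\gamma$ has constant curvature and is therefore a circle by the fundamental theorem of plane curves. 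The main obstacle is the converse direction: when $\kappa$ is constant, our computation already shows that $e^{i\alpha}$ is an eigenfunction with eigenvalue $\kappa/2$, but concluding $\lambda_1 = \kappa/2$ requires ruling out smaller positive eigenvalues in the nonzero Fourier modes $(m,n)\neq (0,0)$. This reduces to uniform lower bounds for the first positive eigenvalues of weighted Sturm-Liouville problems on $\mathbb{T}$ with weights of the form $e^{2r(m\cos\phi + n\sin\phi)}$---Poincar\'e-type inequalities on the circle with exponential weight---which we expect to follow either from an explicit eigenfunction analysis involving modified Bessel functions, or from a rearrangement-type argument exploiting the log-concavity of the weight.
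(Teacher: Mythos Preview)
Your argument for the upper bound \eqref{e:13} and for the ``only if'' direction of the equality case is correct and is, in fact, the paper's argument in complex notation: since the turning angle $\alpha$ satisfies $(\xi',\eta')=(\cos\alpha_0,\sin\alpha_0)$ for some $\alpha_0$ with $\alpha_0'=\kappa$, your test function $e^{i\alpha}$ is a constant multiple of $q+ip$ with $q=\xi'$, $p=\eta'$. The paper plugs $p$ and $q$ separately into the Rayleigh quotient for $B_{00}=-(2\kappa)^{-1}\,d^2/ds^2$ and averages; your single complex computation packages the same identities. Likewise, your derivation of $\kappa'\equiv 0$ from the eigenvalue equation is the complex version of the paper's: it writes $Bp=\lambda_1 p$ and $Bq=\lambda_1 q$, multiplies by $p$ and $q$ respectively, and adds.

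The genuine gap is the one you flag yourself: the ``if'' direction of the equality statement. You need that when $\kappa$ is constant the first positive eigenvalue of $\Box_b$ is \emph{exactly} $\kappa/2$, i.e., that no smaller positive eigenvalue hides in a mode $(m,l)\neq(0,0)$. Your proposed routes (modified Bessel functions, log-concave rearrangement) are speculative, and neither is how the paper proceeds. The paper shows that for constant $\kappa$ the eigenvalue problem $B_{ml}v=\lambda v$ becomes, after the substitution $\tau=\kappa s+\text{phase}$, $a=\kappa^{-1}\sqrt{m^2+l^2}$, $E=2\lambda/\kappa$, the Whittaker--Hill equation
\[
-u''+(a^2\sin^2\tau+a\cos\tau)u=Eu,
\]
and then, via $u=w\,e^{-a\cos\tau}$, the Ince equation $-w''-2a\sin\tau\,w'=Ew$. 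Restricting to odd periodic functions identifies the positive eigenvalues with the spectrum of an explicit infinite tridiagonal (non-symmetric Jacobi) matrix $\mathcal{I}(a)$ with diagonal $k^2$ and off-diagonal entries $\pm k' a$. The key step is a lemma locating an eigenvalue-free sectorial region for finite tridiagonal matrices with positive diagonal and $\mu_k\nu_k\le 0$, proved by an induction on principal minors; combined with the approximation of eigenvalues of $\mathcal{I}(a)$ by those of its truncations, this yields $E\ge 1$, i.e., $\lambda_1(B_{ml})\ge \kappa/2$ uniformly in $(m,l)$. This is the substantive missing piece, and without it the equality characterization in Theorem~\ref{thm:1} is not established.
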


Since $\gamma$ is a simple closed curve, by the Hopf winding number theorem,
	\[
		\mathrm{Vol}(M) := \int_M \theta \wedge d\theta
		= 
		4\pi^2 \int_0^{\ell} \kappa(s) ds 
		=
		8\pi^3.
	\]
Let $R$ be the Webster scalar curvature of $\theta$. It was shown in \cite{burns1988global} that $R = \kappa /2 - (\log \kappa)''/(2\kappa)$ and thus
\[
\int_M R \, \theta \wedge d\theta 
=
4\pi^2 \int_0^{\ell} \left(\kappa /2 - (\log \kappa)''/(2\kappa)\right) \kappa ds
=
2\pi^2 \int_0^{\ell} \kappa^2 ds.
\]
Estimate \eqref{e:13} is then equivalent to
\[
	\lambda_1(\Box_b) \leq \frac{1}{8\pi^3} \int _M R \theta \wedge d\theta = \frac{1}{\mathrm{Vol}(M)} \int _M R \theta \wedge d\theta.
\]
In other words, $\lambda_1(\Box_b)$ is not larger than the average value of the Webster scalar curvature on~$M$. For a comparison, the Chanillo-Chiu-Yang's lower bound \cite{chanillo2012embeddability} yields
\[
	\lambda_1 \geq \frac12 \min R,
\]
where the equality holds if only if $M$ is the sphere (which is never the case in our setting, since $M$ is diffeomorphic to a 3-torus).

\medskip 

Let us conclude this introduction by remarking that our analysis uses the fact that the generating curve is closed in an essential way. In fact, this assumption allows to reduce the analysis of Kohn Laplacians on this class of real hypersurfaces to that of a family of periodic differential equations. In particular, the Kohn Laplacian on the ``symmetric model'' (i.e., when $\gamma$ is a circle) can be decomposed into a family of ``quasi-exactly solvable'' periodic Schr\"odinger equations, the Whittaker--Hill equations, that have been studied for a long time (cf. \cite{turbiner2016one,volkmer2008approximation}).

\section{Analysis on Reinhardt real hypersurfaces}

As in the introduction, we have the following data:\begin{itemize}
	\item a strictly pseudoconvex Reinhardt real hypersurface $M\subset\mathbb{C}^2$ with closed generating curve.
	\item its generating curve $\gamma(s)=(\xi(s), \eta(s))$, parametrized by arc-length $s$, so that $\mathbb{R}/\ell\mathbb{Z}\times \mathbb{T}^2\simeq M$ via \eqref{parametrization}. Here $\ell$ is the length of $\gamma$, and we may naturally think of $s$ as a variable in $\mathbb{R}/\ell\mathbb{Z}$. 
	\item $\kappa = \kappa(s)$, the positive curvature function of $\gamma$, also expressed with respect to the arc-length parameter and thought of as a $\mathbb{T}^2$-invariant function $\kappa(s,x,y)=\kappa(s)$ on $M$. 
	\item the normalized pseudohermitian structure 	$\theta = \eta{'} \, dx -\xi{'} dy$, with attached volume form $dV=\kappa(s)ds\wedge dx\wedge dy$. 
	\item the Kohn Laplacian $\Box_b$ associated with $\theta$, with first positive eigenvalue $\lambda_1(\Box_b)$. 
	\end{itemize}

To obtain a more concrete description of $\Box_b$, we exploit the $\mathbb{T}^2$-invariance of our setting, that is, Fourier analysis. First of all, a function $u$ on $M$ can be identified with a triple periodic function of period $(\ell, 2\pi, 2\pi)$, i.e., as a function on $\mathbb{R}/\ell \mathbb{Z}\times \mathbb{T}^2$. Therefore, $u$ admits a partial Fourier expansion
\[
u(s, x, y) 
=
\sum_{m,l \in \mathbb{Z}} u_{m,l}(s) \exp(imx + ily),
\]
where \begin{equation}\label{fourier}
u_{m,l}(s)=\frac{1}{4\pi^2}\int_{\mathbb{T}^2}u(s,x,y)e^{-imx-ly}dx\, dy
\end{equation} 
are the Fourier coefficient functions, which are $\ell$-periodic, and we have the orthogonal decomposition
\[
	L^2(M,dV) = \bigoplus_{l,m \in \mathbb{Z}} H_{m,l},
\]
where $H_{m,l} = \{v(s) \exp(imx + ily)\colon \ v\in L^2(\mathbb{R}/\ell\mathbb{Z}, \kappa(s)ds)\}$. Notice that $L^2(\mathbb{R}/\ell\mathbb{Z},\kappa(s)ds)$ coincides with $L^2(\mathbb{R}/\ell\mathbb{Z},ds)$ as a set, but the scalar products are different. 

\begin{prop}\label{prop:21} Let $M$ be a strictly pseudoconvex Reinhardt real hypersurface in $\mathbb{C}^2$ with closed generating curve, and let $\Box_b$ be the Kohn Laplacian associated to the normalized pseudohermitian structure. \newline Then the spaces $H_{m,l}$ are invariant under $\Box_b$ for every $l, m\in \mathbb{Z}$. More precisely, if $u(x, y, s) = v(s)\, e^{imx + ily}$ is smooth, then
\begin{align}\label{e:boxform}
\Box_ b u
=
\left\{\frac{1}{2\kappa}\left(- \frac{d^2}{ds^2} + V_{m,l}\right)v\right\}e^{imx + ily} .
\end{align}
Here, \[V_{m,l} = (lp + mq)^2 + \kappa (lq - mp),\] where $q(s) = \xi'(s)$, $p(s) = \eta'(s)$, and $\kappa(s)$ is the curvature function of $\gamma$.\newline
In particular, $\Box_bu=0$ (for $u$ as above) if and only if \begin{equation}\label{kerB}
	v(s) = C \exp (l\eta(s) + m \xi(s)),
\end{equation} 
where $C\in \mathbb{C}$ is a constant, that is, if and only if $u(s,x,y)=C \exp\left(m(\xi + i x) + l(\eta + iy)\right)$.
\end{prop}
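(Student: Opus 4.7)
\emph{Step 1 (invariance of $H_{m,l}$).} Since $\theta=\eta'\,dx-\xi'\,dy$ depends only on $s$, the generators $\partial_x,\partial_y$ of the $\mathbb{T}^2$-action preserve all the pseudohermitian data and therefore commute with $\Box_b$. As $H_{m,l}$ is the joint $(im,il)$-eigenspace of $\partial_x,\partial_y$, it is $\Box_b$-invariant.

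\emph{Step 2 (the explicit formula).} The plan is to exhibit a CR $(0,1)$-frame and then compute. A direct calculation with \eqref{parametrization} shows that
\[
\bar{L} := \partial_s + iq\,\partial_x + ip\,\partial_y, \qquad q:=\xi',\ p:=\eta',
\]
is tangential to $M$ and purely of type $(0,1)$. Using $\theta=p\,dx-q\,dy$ together with the Frenet-type relations $q'=-\kappa p$, $p'=\kappa q$ (which are built into the orientation choice making $\kappa>0$), one finds $d\theta=\kappa(q\,ds\wedge dx+p\,ds\wedge dy)$, so the Levi form satisfies $h:=-i\,d\theta(L,\bar{L})=2\kappa$. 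The Kohn Laplacian on functions can then be written as $\Box_b u=\bar{L}^{*}\bigl(h^{-1}\bar{L}u\bigr)$, where $\bar{L}^{*}$ is the formal adjoint in $L^2(M,dV)$. Integrating by parts against $dV=\kappa\,ds\,dx\,dy$ (using $\partial_x^{*}=-\partial_x$, $\partial_y^{*}=-\partial_y$) gives
\[
\bar{L}^{*}=-\partial_s-\frac{\kappa'}{\kappa}+iq\,\partial_x+ip\,\partial_y.
\]
Substituting $u=v(s)e^{imx+ily}$ yields $\bar{L}u=(v'-Av)\,e^{imx+ily}$ with $A:=mq+lp$; applying $\bar{L}^{*}$ to $h^{-1}\bar{L}u=(v'-Av)/(2\kappa)\cdot e^{imx+ily}$ and simplifying one arrives at $\Box_b u=\frac{1}{2\kappa}\bigl(-v''+(A'+A^2)v\bigr)\,e^{imx+ily}$. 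Since $A'=mq'+lp'=\kappa(lq-mp)$ by the Frenet relations, this is precisely the formula \eqref{e:boxform}. The single delicate point---and the main obstacle in the calculation---is a cancellation of two $\kappa'/\kappa$-contributions, one produced by $\bar{L}^{*}$ and the other by differentiating $1/(2\kappa)$; this cancellation is not accidental but reflects the matching between the Levi normalization $h=2\kappa$ and the volume factor $\kappa$ in $dV$, and is exactly what removes the first-order $v'$ term from the resulting operator.

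\emph{Step 3 (kernel).} Nonnegativity of $\Box_b$ together with the identity $\langle\Box_b u,u\rangle_{L^2}=\int_M h^{-1}|\bar{L}u|^2\,dV$ implies that a smooth $u$ satisfies $\Box_b u=0$ if and only if $\bar{L}u=0$. For $u=v(s)e^{imx+ily}$ this reduces to the first-order ODE $v'=(m\xi'+l\eta')v$, whose solutions are $v(s)=C\exp(m\xi(s)+l\eta(s))$; these descend to smooth functions on $M$ because $\xi$ and $\eta$ are $\ell$-periodic (as $\gamma$ is closed). In ambient coordinates this reads $u=Cz^m w^l$, which is a global CR-holomorphic function on $M$, as claimed.
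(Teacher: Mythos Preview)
Your proof is correct. It differs from the paper's in one methodological point: the paper normalizes the $(1,0)$-frame so that the Levi matrix equals $1$ (taking $Z_1=\frac{1}{\sqrt{2\kappa}}\bar L$), then computes $\Box_b u=-u_{\bar 1,}{}^{\bar 1}$ via the Tanaka--Webster covariant derivative, which requires first working out the connection form $\omega_1{}^1$. You instead keep the unnormalized frame $\bar L$, record the Levi factor $h=2\kappa$, and use the integration-by-parts identity $\Box_b=\bar L^{*}\,(h^{-1}\bar L)$; the role of the connection coefficient is then played by the $\kappa'/\kappa$ term coming from the volume weight, and its cancellation against $\partial_s(1/2\kappa)$ (which you highlight) is precisely the counterpart of the paper's subtraction of $\omega_{\bar 1}{}^{\bar 1}(Z_1)u_{\bar 1}$. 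Your route is a bit more elementary, since it avoids the Tanaka--Webster connection altogether; the paper's route has the side benefit of producing the connection and torsion forms \eqref{e:cf}--\eqref{e:torsion}, which the authors need anyway to identify the Webster scalar curvature. Your Step~1 is also a slightly cleaner way to see the $H_{m,l}$-invariance than the paper's direct computation.
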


Observe that the expression in curly brackets in formula \eqref{e:boxform} defines a 1-dimensional Schr\"odinger operator on $\mathbb{R}/\ell\mathbb{Z}$ with potential $V_{m,l}(s) = (lp + mq)^2 + \kappa (lq - mp)$. Thus, the analysis of $\Box_b$ on $L^2(M, dV)$ is reduced to that of the two-parameter family of operators $\left\{\frac{1}{2\kappa}\left(- \frac{d^2}{ds^2} + V_{m,l}\right)\right\}_{m,l}$ on $L^2(\mathbb{R}/\ell\mathbb{Z}, \kappa(s)ds)$. 

We also remark that, by \eqref{kerB}, $u(s,x,y)=v(s)e^{ilx+imy}$ is CR if and only if it is the restriction to $M$ of a multiple of the holomorphic monomial $z^m w^l$. 

\begin{proof} We choose a frame $Z_1$ of $T^{(1,0)}M$ as follows (cf. \cite{burns1988global}). Since $Z_1$ must satisfy
\[
Z_1 \left( e^{\xi(s) - i x}\right) 
=
Z_1 \left( e^{\eta(s) - i y}\right) = 0,
\] 
we find that
\[
	Z_1
	=
	\phi\left(-i\, \eta{'} \, \frac{\partial}{\partial y} - i\, \xi{'} \, \frac{\partial}{\partial x }+\frac{\partial}{\partial s}\right),
\]
for some function $\phi$ on $M$. Thus, if we require that the Levi ``matrix'' is the identity, i.e., $h_{1\ob} = -i d\theta (Z_1, Z_{\ob}) = 1$, then we must have $2\kappa |\phi|^2 = 1$. Hence, we take $\phi = 1/\sqrt{2\kappa}$. With this choice of $Z_1$, we have the dual coframe 
\[
\theta^1 := \sqrt{\kappa/2}\,\left(i\,\xi{'} \, dx + i\, \eta{'} \, dy+ds \right).
\]
A computation (cf. \cite{burns1988global}) shows that the connection and torsion forms are:
\begin{align}\label{e:cf}
\omega_{1}{}^{1}
&=
\kappa'/\sqrt{8\kappa^3}\,(\theta^1 - \theta^{\ob}) - i(\kappa /2) \theta\\
\tau^1 
& = \frac{i\kappa}{2}\theta^{\ob} \label{e:torsion}.
\end{align} Thus, \begin{align}
d\omega_1{}^{1} 
&=
\left[\frac{\kappa}{2}-\frac{(\log \kappa)''}{2\kappa}\right]\theta^1\wedge \theta^{\ob} - \frac{i\, \kappa'}{\sqrt{2\kappa}}[\theta^1\wedge \theta + \theta^{\ob} \wedge \theta]. \label{e:dcf}
\end{align}
%

The Reeb field $T$ is determined by $d\theta(T,Z_1) = 0$, $\theta(T) = 1$, and its reality. Thus
\begin{align}
T =
\eta{'} \,\frac{\partial}{\partial x } - \xi{'} \, \frac{\partial}{\partial y}.
\end{align}
If $u$ is a smooth function, then we have
\begin{equation} 
	d u= (Z_1 u) \theta^1 + (Z_{\ob} u)\theta^{\ob} + (Tu) \theta
\end{equation} 
and $\partial_b u = (Z_1 u) \theta^{1}$, which is a genuine one-form on $M$. Integration by parts implies that 
\begin{equation} 
	\Box_b u = - u_{\ob,}{}^{\ob}.
\end{equation} 
Here, the indices preceded by a comma indicate the covariant derivative:
\begin{equation} 
	u_{\ob, 1} = Z_1 Z_{\ob} u - \omega_{\ob}^{\ob}(Z_1) Z_{\ob} u.
\end{equation}
In the chosen frame, raising and lowering indices of tensors only change their ``types'' since $h_{1\ob} =1$.
Since $u(x, y, s) = v(s)\, \exp(im x + i l y)$, where $l$ and $m$ are constants, we have
\[
u_{\bar{1}} = Z_{\ob} u =  \frac{1}{\sqrt{2 \kappa }} e^{i (l y+m x)} \left(v' - (l p + m q) v\right).
\]
Since $\Box_bu=0$ if and only if $Z_{\ob} u=0$, identity \eqref{kerB} follows immediately. Moreover,
\begin{align*}
- \Box_b u 
& = Z_1(u_{\bar{1}}) - \omega_{\ob}^{\ob}(Z_1) u_{\bar{1}} \\
& = Z_1\left(\frac{1}{\sqrt{2\kappa }} e^{i (l y+m x)} \left(v' - (l p + m q) v\right)\right) \\
& \quad -\frac{1}{4\kappa } e^{i (l y+m x)} (\log \kappa)' \left(v' - (l p + m q) v\right)\\
& = \frac{1}{2\kappa } e^{i(l y+m x)} \left(v'' - (l^2 p^2+2 l m p q+l 
p'+m^2 q^2+m q')v\right).
\end{align*}
Substituting $p' = \kappa q$ and $q' = - \kappa p$, we obtain
\eqref{e:boxform}. 
\end{proof}

Proposition \ref{prop:21} can be used to verify that $\Box_b$, initially defined as a symmetric operator on $C^\infty(M)\subset L^2(M,dV)$, is essentially self-adjoint. One way to see this is the following. By a well-known criterion \cite[Corollary to Theorem VIII.3]{reed-simon-I}, it is enough to check that $\Box_b^*-\overline{\lambda}$ is injective when $\lambda\in \mathbb{C}\setminus \mathbb{R}$. Here $\Box_b^*$ denotes the Hilbert space adjoint. In other words, we have to show that a function $f\in L^2(M,dV)$ such that 
\begin{equation*}
	\int_M f\, \overline{\left(\Box_b-\lambda\right)\varphi} \,dV = 0   \qquad\forall \varphi\in C^\infty(M),
\end{equation*} 
must necessarily vanish. The choice $\varphi(s,x,y)=v(s)e^{imx+ily}$ for $v\in C^\infty(\mathbb{R}/\ell\mathbb{Z})$ and $l,m\in \mathbb{Z}$ gives
\[\begin{split}
	0 
	&= \int_M f(s,x,y)\, \overline{\left(\left\{\frac{1}{2\kappa}\left(- \frac{d^2}{ds^2} + V_{m,l}\right)v\right\}-\lambda\right)v(s)} \,\kappa(s)ds\, e^{-imx-ily}dx \,dy \\
&= \int_{\mathbb{R}/\ell\mathbb{Z}} f_{ml}(s)\, \overline{\left(\left\{\frac{1}{2\kappa}\left(- \frac{d^2}{ds^2} + V_{m,l}\right)v\right\}-\lambda\right)v(s)} \,\kappa(s)ds , 
\end{split}\]
where the functions $f_{ml}\in L^2(\mathbb{R}/\ell\mathbb{Z}, \kappa(s)ds)$ are as in \eqref{fourier}. In other words, $(B_{ml}^*-\overline\lambda)f_{ml}=0$, where \[B_{ml} =	\frac{1}{2\kappa (s)} \left(-\frac{d^2}{ds^2} + V_{m,l}(s)\right), \]
initially defined on $C^\infty(\mathbb{R}/\ell\mathbb{Z})$ and symmetric with respect to the $L^2(\mathbb{R}/\ell\mathbb{Z}, \kappa(s)ds)$ scalar product. This operator is essentially self-adjoint (this is classical, see e.g. \cite[Theorem X.28]{reed-simon-II}), and therefore, appealing again to the essential self-adjointness criterion cited above, we must have $f_{ml}=0$ for every $m,l$, and hence $f=0$, as we wanted.

It is also classical that $L^2(\mathbb{R}/\ell\mathbb{Z}, \kappa(s)ds)$ admits an orthonormal basis consisting of smooth eigenfunctions $\psi^{ml}_n$ ($n=0,1, \ldots$) of $B_{ml}$, corresponding to a sequence of eigenvalues \[
	0=\lambda_0(B_{ml})<\lambda_1(B_{ml})\leq \cdots
\] tending to $+\infty$.  Notice that $\lambda_0(B_{m,l})$ vanishes and has multiplicity one, because $B_{lm}$ is nonnegative (as a consequence of the nonnegativity of $\Box_b$) and its kernel is one-dimensional.

Then, by Proposition \ref{prop:21}, the collection $\left\{\psi^{ml}_n(s)e^{imx+ily}\right\}_{m,l,n}$ is an orthonormal basis of $L^2(M,dV)$ consisting of eigenfunctions of $\Box_b$ (they are certainly in the domain of $\Box_b$, since they are smooth). The following two observations follow (for $\mu>0$): \begin{enumerate}
\item To show that $\lambda_1(\Box_b)\leq \mu$, it is enough to show that $\lambda_1(B_{ml})\leq \mu$ for at least one pair $(m,l)\in \mathbb{Z}^2$. 
\item To prove that $\lambda_1(\Box_b)\geq \mu$, we need to ensure that for every pair $(m,l)\in \mathbb{Z}^2$, we have the inequality $\lambda_1(B_{ml})\geq \mu$. 
\end{enumerate}

The first positive eigenvalues of the $B_{m,l}$'s are uniformly bounded below by a positive constant, namely,
	\begin{equation} 
		\lambda_1(B_{ml}) \geq \lambda_1(\Box_b) > 0.
	\end{equation} 
This uniform estimate seems to be nonobvious from the point of view of Schr\"odinger operators. More precisely, since the Webster scalar curvature is $R = (1/2)\left(\kappa - \kappa^{-1}(\log \kappa)''\right)$, the Lichnerowicz-type estimate for $\lambda_1(\Box_b)$ of \cite{chanillo2012embeddability}  gives 
\begin{equation} 
	\lambda_1(B_{ml}) \geq \frac{1}{4} \min_{s\in \mathbb{R}/\ell\mathbb{Z}} \left(\kappa(s) - \frac{(\log \kappa(s))''}{\kappa(s)}\right) \quad \forall m,l.
\end{equation} 
Unfortunately, this estimate is far from being sharp even when $\kappa$ is constant.

\section{The upper bound for $\lambda_1(\Box_b)$}

In this section, we prove inequality \eqref{e:13} of Theorem \ref{thm:1}. By the discussion in the last section, this will follow from the next proposition. 

\begin{prop} Let $\gamma:\mathbb{R}/\ell\mathbb{Z}\rightarrow \mathbb{R}^2$ ($s:=$ arc-length parametrization) be a smooth simple closed curve, and let $\kappa(s)$ be its curvature function. Assume that $\kappa(s)>0$ for every $s$. Let $B:=B_{00}=-(2\kappa)^{-1} d^2/ds^2$, which defines a nonnegative and essentially self-adjoint operator on $L^2(\mathbb{R}/\ell\mathbb{Z}, \kappa ds)$. Then its first positive eigenvalue satisfies the bound:
\[
	\lambda_1(B) \leq \frac{1}{4\pi} \int_{\mathbb{R}/\ell\mathbb{Z}} \kappa^2 ds,
\]
with equality if and only if $\kappa$ is constant, that is, if and only if $\gamma$ is a circle of radius $\frac{1}{2\lambda_1(B)}$.
\end{prop}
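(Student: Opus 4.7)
The plan is to apply the variational (Rayleigh–Ritz) characterization of the first positive eigenvalue of $B$ on $L^2(\mathbb{R}/\ell\mathbb{Z},\kappa\, ds)$, and to use a cleverly chosen \emph{pair} of test functions extracted from the geometry of $\gamma$. Since the kernel of $B$ is the line of constants, the min–max principle reads
\[
\lambda_1(B)
=
\inf\left\{\frac{\tfrac12\int_0^\ell (v')^2\,ds}{\int_0^\ell v^2\,\kappa\,ds}\colon\ v\in C^\infty(\mathbb{R}/\ell\mathbb{Z})\setminus\{0\},\ \int_0^\ell v\,\kappa\,ds=0\right\}.
\]
Writing $\gamma'(s)=(q(s),p(s))=(\cos\alpha(s),\sin\alpha(s))$ with $\alpha'=\kappa$, the natural candidates are $v_1:=q=\xi'$ and $v_2:=p=\eta'$. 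Using $p'=\kappa q$ and $q'=-\kappa p$ (Frenet), together with $p^2+q^2=1$, one computes
\[
\int_0^\ell (v_1')^2+(v_2')^2\,ds=\int_0^\ell \kappa^2\,ds,
\qquad
\int_0^\ell (v_1^2+v_2^2)\,\kappa\,ds=\int_0^\ell \kappa\,ds=2\pi,
\]
the last identity being the Hopf turning theorem. The orthogonality to constants, $\int_0^\ell v_i\,\kappa\,ds=0$ for $i=1,2$, is just the statement that $\int_0^\ell \cos\alpha\cdot\alpha'\,ds$ and $\int_0^\ell \sin\alpha\cdot\alpha'\,ds$ vanish over one full turn $\alpha_0\to\alpha_0+2\pi$.

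Next, I apply the elementary mediant inequality $\min(a_1/b_1,a_2/b_2)\leq (a_1+a_2)/(b_1+b_2)$ with $a_i=\tfrac12\int (v_i')^2\,ds$ and $b_i=\int v_i^2\kappa\,ds$: at least one of the two Rayleigh quotients is bounded above by $\frac{1}{4\pi}\int_0^\ell\kappa^2\,ds$, and the variational principle then yields the claimed upper bound for $\lambda_1(B)$.

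For the equality case, I trace when the above chain of inequalities is saturated. Equality in the mediant step forces both individual Rayleigh quotients to equal $\lambda_1(B)$, so each of $v_1,v_2$ is a minimizer and hence an eigenfunction: $v_i''=-2\lambda_1(B)\,\kappa\, v_i$ for $i=1,2$. Using $q'=-\kappa p$ and $p'=\kappa q$ to rewrite $q''$ and $p''$, the eigenvalue equations become the pair
\[
\kappa' p=(2\lambda_1(B)\,\kappa-\kappa^2)\,q,\qquad \kappa' q=-(2\lambda_1(B)\,\kappa-\kappa^2)\,p.
\]
Multiplying the first by $p$, the second by $q$, and adding gives $\kappa'(p^2+q^2)=0$, so $\kappa'\equiv 0$. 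Thus equality forces $\kappa$ to be constant, in which case $\gamma$ is a circle of length $\ell$, hence of curvature $\kappa=2\pi/\ell$; a direct Fourier-series computation on $\mathbb{R}/\ell\mathbb{Z}$ then confirms that $\lambda_1(B)=\pi/\ell$ coincides with $(4\pi)^{-1}\int_0^\ell\kappa^2\,ds$, and the radius $\ell/(2\pi)=1/(2\lambda_1(B))$ is as stated.

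The main conceptual point, and the only place where real insight enters, is the choice of the paired test functions $(q,p)=(\xi',\eta')$: they are natural geometric objects, their Rayleigh quotients have the right numerator because $|\gamma'|=1$ turns derivatives into $\kappa$, the Hopf theorem simultaneously normalizes the denominator to $2\pi$ and enforces orthogonality to constants, and the Frenet relations make the equality analysis collapse to $\kappa'=0$. The remaining steps are routine.
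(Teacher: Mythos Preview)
Your proof is correct and essentially identical to the paper's: both use the test functions $p=\eta'$ and $q=\xi'$, the Frenet relations $p'=\kappa q$, $q'=-\kappa p$ for orthogonality and the numerator, the Hopf turning theorem for the denominator, and then deduce $\kappa'\equiv 0$ in the equality case by showing both $p$ and $q$ must be eigenfunctions. The only cosmetic difference is that you phrase the averaging step via the mediant inequality, whereas the paper simply adds the two inequalities $\lambda_1(p,p)\le(Bp,p)$ and $\lambda_1(q,q)\le(Bq,q)$ directly.
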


Notice that it also follows from this statement that, if equality is attained in \eqref{e:13}, then the generating curve is a circle of radius $\frac{1}{2\lambda_1(\Box_b)}$. 

\begin{proof} Denote by $(u,v)$ the inner product in $L^2(\mathbb{R}/\ell\mathbb{Z}, \kappa ds)$, i.e.,
	\begin{equation} 
		(u,v) := \int_{\mathbb{R}/\ell\mathbb{Z}} u(s)\overline{v(s)}\kappa(s)\, ds.
	\end{equation} Notice that
\[
	(B u, v)
	=
	-\frac12 \int_{\mathbb{R}/\ell\mathbb{Z}} \kappa^{-1} u''(s) \overline{v}(s) \kappa ds
	=
	\frac12 \int_{\mathbb{R}/\ell\mathbb{Z}} u'(s) \overline{v'(s)} ds,
\]
The first positive eigenvalue of $B$ has the following variational characterization:
\begin{equation} \label{variational}
	\lambda_1:=\lambda_1(B) = \inf \left\{\frac{(B v, v)}{(v,v)} \colon v \in W^{1,2} (\mathbb{T},\mathbb{R}), \int_0^\ell  v \kappa  ds = 0, v \ne \, \text{constant}\right\}.
\end{equation} 
Recall the notation $\gamma(s)=(\xi(s), \eta(s))$ and $\gamma'(s)=(q(s), p(s))$. We have $\int_{\mathbb{R}/\ell\mathbb{Z}} p\kappa ds = \int_{\mathbb{R}/\ell\mathbb{Z}} q'\, ds =0$ and $\int_{\mathbb{R}/\ell\mathbb{Z}} q\kappa ds = -\int_{\mathbb{R}/\ell\mathbb{Z}} p'\, ds =0$. Moreover, 
\begin{align*}
	(Bp, p)
	&=
	-\frac{1}{2}\int_{\mathbb{R}/\ell\mathbb{Z}} \kappa^{-1} p'' p\, \kappa ds
	=
	-\frac{1}{2} \int_{\mathbb{R}/\ell\mathbb{Z}} p'' p\, ds \notag \\
	& =
	\frac{1}{2} \int_{\mathbb{R}/\ell\mathbb{Z}} (p')^2 ds \notag 
	=
	\frac{1}{2} \int_{\mathbb{R}/\ell\mathbb{Z}} \kappa^2 q^2 ds.
\end{align*}
Similarly, we have 
\begin{equation*} 
	(Bq, q)
	=
	\frac{1}{2} \int_{\mathbb{R}/\ell\mathbb{Z}} \kappa^2 q^2 ds,
\end{equation*} 
and thus 
\begin{equation*} 
	(Bp, p) + (Bq,q) = \frac{1}{2} \int_{\mathbb{R}/\ell\mathbb{Z}} \kappa^2 ds,
\end{equation*} 
since $p^2 + q^2 = 1$. On the other hand, by the Hopf winding number theorem,
\begin{equation*} 
	(p,p) + (q,q) = \int_{\mathbb{R}/\ell\mathbb{Z}} (p^2 + q^2)\, \kappa ds= \int_{\mathbb{R}/\ell\mathbb{Z}} \kappa ds = 2\pi.
\end{equation*} 
Hence, by the variational characterization \eqref{variational}, we have that
\begin{equation}\label{upperbound} 
	\lambda_1 \leq \frac{1}{4\pi} \int_{\mathbb{R}/\ell\mathbb{Z}} \kappa^2 ds.
\end{equation} 

Suppose that equality occurs in \eqref{upperbound}. Then necessarily $B p = \lambda_1 p$ and $B q = \lambda_1q$, 
\begin{equation} 
	\lambda_1 p = B p = -(2\kappa)^{-1} p''
	=
	(\kappa/2) p - \frac{1}{2} (\log \kappa)' q,
\end{equation} 
and thus
\begin{equation} 
	\lambda_1 p^2
	=
	(\kappa/2) p^2 - \frac{1}{2} (\log \kappa)' pq.
\end{equation} 
Similarly, 
\begin{equation} 
\lambda_1 q^2 = (\kappa/2) q^2 + \frac{1}{2} (\log \kappa)' pq.
\end{equation} 
Adding these together and using $p^2 + q^2 = 1$, we obtain $\lambda_1 \equiv \frac{\kappa}{2}$,
i.e., $\kappa\equiv 2\lambda_1$ is constant and $\gamma$ is a circle of radius $1/(2\lambda_1)$. This completes the proof.
\end{proof}

\section{Equality case}

In this section we study the spectrum of $\Box_b$ (defined with respect to the normalized pseudohermitian structure \eqref{e:12}) on the Reinhardt real hypersurface having as closed generating curve $\gamma$ a circle of radius $\kappa^{-1}$, that is, we assume that the curvature function $\kappa(s) = \kappa$ is constant. In particular, we show that in this case $\kappa/2$ is the \textit{lowest} positive eigenvalue of $\Box_b$, thus completing the proof of our main theorem. 

\noindent
\textit{Remark:} Since in this case the Webster scalar curvature is $R\equiv \kappa/2$, the inequality of Chanillo-Chiu-Yang \cite{chanillo2012embeddability} already gives the lower bound $\lambda_1 \geq \kappa/4$. In fact, the needed assumption that the CR Paneitz operator is nonnegative has been shown to hold for embeddable manifolds by Takeuchi \cite{takeuchi}.

Let then $\gamma(s)=(\xi(s), \eta(s))$, where $\xi(s) = \kappa^{-1}\cos (\kappa s)$, $\eta(s) = \kappa^{-1}\sin (\kappa s)$, $q(s) = -\sin (\kappa s)$, $p(s) = \cos (\kappa s)$, and $s \in \mathbb{R}/(2\pi \kappa^{-1}\mathbb{Z})$. We note in passing that different values of $\kappa>0$ correspond to inequivalent CR manifolds: see \cite{burns1988global}.

Recall that our task boils down to proving that $\lambda_1(B_{ml})\geq \frac{\kappa}{2}$ for every $(m,l)\in \mathbb{Z}^2$, where \[
B_{ml}=\frac{1}{2\kappa}\left(- \frac{d^2}{ds^2} + V_{m,l}\right), \qquad V_{m,l} = (lp + mq)^2 + \kappa (lq - mp).\]
Thus, if we make the change of variables $\tau = \kappa s + \alpha \in \mathbb{T}=\mathbb{R}/2\pi\mathbb{Z}$, $E =  2\kappa^{-1} \lambda$, and $a= \kappa^{-1} \sqrt{m^2+l^2}$, then the eigenvalue equation $B_{ml}v=\lambda v$ becomes the well-known Whittaker--Hill equation 
\begin{equation}\label{e:wh2}
	\mathrm{WH}_au(\tau):=-\frac{d^2 u}{d \tau^2 }+ \left(a^2 \sin^2(\tau) + a \cos(\tau)\right) u  = E \, u.
\end{equation} 
See, e.g., section 7.4 of \cite{magnus1966hill} or \cite{volkmer2008approximation}. Writing $u(\tau) = w(\tau) \exp(- a\cos(\tau))$, \eqref{e:wh2} becomes
\begin{equation}\label{e:Ince}
	\mathrm{I}_aw(\tau):= -\frac{d^2 w}{d \tau^2 } - 2a \sin (\tau )\, \frac{d w}{d \tau}(\tau ) = E w(\tau ),
	\qquad \tau \in \mathbb{R}/2\pi \mathbb{Z}.
\end{equation}

Notice that, while $\mathrm{WH}_a$ is self-adjoint with respect to the $L^2(\mathbb{T}, d\tau)$ scalar product (as a consequence of the self-adjointness of $B_{ml}$ on $L^2(\mathbb{T}, \kappa d\tau)$), the operator $\mathrm{I}_a$ is self-adjoint with respect to $L^2(\mathbb{T}, e^{-2a\cos(\tau)}d\tau)$. Hence, in passing from \eqref{e:wh2} to \eqref{e:Ince}, we simplified the coefficients (passing from a quadratic to a linear trigonometric polynomial) at the expense of losing self-adjointness with respect to $L^2(\mathbb{T}, d\tau)$. 

At any rate, both $\mathrm{WH}_a$ and $\mathrm{I}_a$ have zero as a simple (and smallest) eigenvalue, and preserve the parity of functions (as does the transformation $w\mapsto we^{-a\cos(\tau)}$). Hence every positive eigenvalue has multiplicity two and the corresponding eigenspace is spanned by an odd and an even $2\pi$-periodic function. By restricting $\mathrm{I}_a$ to the Hilbert space of $2\pi$-periodic \textit{odd} functions, which is closure of the linear span of $\{\sin(k\tau)\}_{k\geq 1}$, a simple computation shows that the double eigenvalues of $\mathrm{I}_a$ coincide with the simple eigenvalues of the infinite tridiagonal matrix $\mathcal{I}(a) = \{\mathcal{I}_{kk'}(a)\}_{k,k'\geq 1}$ given by 
\begin{eqnarray*}
	\mathcal{I}_{kk'}(a) = \begin{cases} 
		k^2\quad &k'=k\\
		(k'-k)k'a\quad &|k-k'|=1\\
		0\quad &\text{otherwise}
	\end{cases}
\end{eqnarray*}

Notice that $\mathcal{I}(a)$ is not self-adjoint, because $\{\sin(k\tau)\}_{k\geq 1}$ is not orthogonal with respect to the $L^2(\mathbb{T}, e^{-2a\cos(\tau)})$ scalar product. Denote by $\mathcal{I}_N(a)$ the $N$-th principal minor of $\mathcal{I}(a)$. For example,
\begin{equation} 
\mathcal{I}_4(a)=\begin{pmatrix}
		1 & 2 a & 0 & 0 \\
		-a & 4 & 3 a & 0 \\
		0 & -2 a & 9 & 4 a \\
		0 & 0 & -3 a & 16 \\
	\end{pmatrix}
\end{equation} 

See \cite{volkmer2008approximation} for a thoroughly discussion of this truncation. By the above discussion, to established the desired inequality $\lambda_1(B_{ml})\geq \kappa/2$, we have to show that if $E$ is an eigenvalue of $\mathcal{I}(a)$, then necessarily $E\geq 1$, for every value of $a$. 

In order to achieve that, we need the following two lemmas. The first follows from known results on Jacobi matrices. 

\begin{lem}\label{lem:jacobi_spectrum}
	Let $\mathcal{I}(a)$ and $\mathcal{I}_N(a)$ be the matrices defined above. Then every eigenvalue of $\mathcal{I}(a)$ is a limit of eigenvalues of $\mathcal{I}_N(a)$ as $N$ tends to $+\infty$.
\end{lem}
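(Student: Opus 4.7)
The plan is to reformulate the eigenvalue problem as a question about zeros of a sequence of polynomials, and then conclude via the minimum modulus principle of complex analysis.

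First I would translate the eigenvalue equation $\mathcal{I}(a) c = E c$ for a sequence $c = (c_k)_{k \geq 1}$ into the three-term recurrence
\[
a(k+1) c_{k+1}(E) = (E - k^2) c_k(E) + a(k-1) c_{k-1}(E), \qquad k \geq 1,
\]
with the conventions $c_0 := 0$ and $c_1 := 1$. This produces polynomials $c_k(E)$ of degree $k-1$; a cofactor expansion of $\det(E I_N - \mathcal{I}_N(a))$ along its last row shows that the characteristic polynomial of $\mathcal{I}_N(a)$ is a nonzero scalar multiple of $c_{N+1}(E)$, so the eigenvalues of $\mathcal{I}_N(a)$ are exactly the zeros of $c_{N+1}$. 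On the other hand, $E$ is an eigenvalue of $\mathcal{I}(a)$ if and only if $(c_k(E))_{k \geq 1}$ is rapidly decaying; equivalently, the formal Fourier sine series $\sum_k c_k(E) \sin(k\tau)$ converges to a smooth odd eigenfunction of $\mathrm{I}_a$.

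The analytic heart of the proof is the following dichotomy. To leading order the recurrence reads $c_{k+1} \sim -(k/a) c_k + c_{k-1}$, so a standard Birkhoff/WKB asymptotic analysis shows that its two-dimensional solution space decomposes into a line of rapidly \emph{decaying} solutions ($|c_k| \sim C a^k/k!$) and a line of rapidly \emph{growing} solutions ($|c_k| \sim C' k!/a^k$). For $E$ not an eigenvalue of $\mathcal{I}(a)$, the solution specified by $c_0 = 0$, $c_1 = 1$ has a nonzero component along the growing line, and hence $|c_N(E)| \to +\infty$ at a factorial rate. The coefficient $\alpha(E)$ of the growing mode is an entire function of $E$ that vanishes exactly at the eigenvalues of $\mathcal{I}(a)$; in particular, $\alpha$ is continuous and nonvanishing off the (discrete) spectrum, which upgrades the pointwise divergence to a uniform estimate $\min_{E \in K} |c_N(E)| \to +\infty$ on every compact $K \subset \mathbb{C}$ disjoint from the spectrum.

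Given this, the lemma follows quickly. Fix an eigenvalue $E^*$ and choose $\epsilon > 0$ small enough that $E^*$ is the only eigenvalue of $\mathcal{I}(a)$ in $\overline{D_\epsilon(E^*)}$ (possible by discreteness of the spectrum). Then $|c_{N+1}(E^*)| \to 0$, while the uniform estimate applied to $K = \partial D_\epsilon(E^*)$ gives $\min_{\partial D_\epsilon} |c_{N+1}| \to +\infty$. Since $c_{N+1}$ is entire, the minimum modulus principle forces $c_{N+1}$ to have a zero in $\overline{D_\epsilon(E^*)}$ for $N$ sufficiently large; diagonalizing in $(\epsilon, N)$ produces a sequence of eigenvalues of $\mathcal{I}_N(a)$ converging to $E^*$.

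The hardest step is the uniformity of $\min_{\partial D_\epsilon} |c_{N+1}| \to +\infty$: pointwise factorial divergence off the spectrum follows from the asymptotic analysis, but the compact-uniform version requires a quantitative transfer-matrix/Gronwall argument tracking continuous dependence of $\alpha(E)$ on $E$. An appealing alternative is to construct a normalization $c_N/G_N$ (with $G_N$ modelling the growing mode) that converges locally uniformly on $\mathbb{C}$ to an entire Jost-type function whose zero set coincides with the spectrum of $\mathcal{I}(a)$, at which point Hurwitz's theorem yields the accumulation of zeros in one stroke.
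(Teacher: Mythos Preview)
The paper itself does not give a proof: it simply invokes known results on tridiagonal (Jacobi) matrices, specifically Theorem~2 of Volkmer and Theorem~2.1 of Malejki, which establish exactly this truncation property for complex Jacobi matrices whose off-diagonal growth is dominated by the diagonal. So your proposal is not a comparison against a competing argument but a self-contained sketch of what those cited references actually prove.

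Your outline is sound and, in fact, close in spirit to Volkmer's argument. The reformulation via the three-term recurrence and the identification of $\det(EI_N-\mathcal{I}_N(a))$ with a scalar multiple of $c_{N+1}(E)$ are correct (the scalar is $N!\,a^{N}$, as a quick induction on the tridiagonal determinant recurrence confirms). The dichotomy between a factorially growing and a factorially decaying solution is the right picture; it is a Birkhoff--Adams situation with a dominant root behaving like $-k/a$, and the subdominant solution carries the eigenvector when $E$ is in the spectrum. The conclusion via the minimum modulus principle (equivalently Hurwitz) is standard once one has locally uniform control.

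The one place to be careful is exactly the step you flag: turning pointwise factorial growth off the spectrum into a \emph{uniform} lower bound on compact subsets disjoint from the spectrum. Your suggested fix---normalize by the dominant asymptotic $G_N$ so that $c_{N+1}/G_{N+1}$ converges locally uniformly to an entire function whose zero set is the spectrum, then apply Hurwitz---is the cleanest route and is essentially how the cited references proceed. The alternative transfer-matrix/Gronwall argument also works but is heavier. Either way, this is a technical point rather than a conceptual gap, and once it is filled your argument is complete.
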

\begin{proof}
	The result for complex Jacobi matrices is by now classical. For the non-symmetric case, this follows from \cite[Theorem~2]{volkmer2008approximation} or \cite[Theorem~2.1]{malejki2018eigenvalues}.
\end{proof}

\begin{lem}\label{lem:tridiag_matrix}
	Assume that the real tri-diagonal matrix
	\[
	A=\begin{pmatrix}
		\delta_1 & \mu_1        &                 & &  \\
		\nu_1	     & \delta_2 & \mu_2       & & \\
		& \nu_2	       & \delta_3 & \ddots & \\
		&                 &    \ddots    & \ddots& \mu_{N-1}\\
		&   &  & \nu_{N-1}& \delta_N \\
	\end{pmatrix}
	\]
	satisfies the following properties:
	\begin{enumerate}
		\item $\delta_k>0$ for every $k$;
		\item $\mu_k\nu_k\leq 0$ for every $k$.
	\end{enumerate}
	Let $0<\delta\leq \frac{\pi}{2}\left(\sum_k\delta_k^{-1}\right)^{-1}$, and $\mu:=\min_k\delta_k$. Then $A$ has no eigenvalues in the open sectorial region
	\[
		T:=\left\{z\in \mathbb{C} \colon -\infty<\Re(z)<\mu,\ |\Im(z)|<\delta(1-\mu^{-1}\Re(z))\right\}.
	\]
\end{lem}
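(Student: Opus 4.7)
The key observation is that the region $T$ lies entirely in the open half-plane $\{\Re(z)<\mu\}$. Consequently, it suffices to prove the stronger claim that every eigenvalue $\lambda$ of $A$ satisfies $\Re(\lambda)\ge\mu$. My plan is to exploit the sign condition $\mu_k\nu_k\le 0$ to perform a complex diagonal similarity that recasts $A$ as a matrix whose Hermitian part is the diagonal matrix of the $\delta_k$'s, so that the conclusion follows from a one-line numerical-range computation.

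First I would dispose of the degenerate situation $\mu_k\nu_k=0$ by an induction on $N$. If $\mu_k=0$ for some $k$, the vanishing of the corresponding superdiagonal entry makes $A$ block lower triangular with respect to the index splitting $\{1,\dots,k\}\sqcup\{k+1,\dots,N\}$; similarly, $\nu_k=0$ makes $A$ block upper triangular with respect to the same splitting. In either case the two diagonal blocks are strictly smaller tridiagonal matrices satisfying the hypotheses of the lemma (with a value of $\mu$ that is no smaller), and their spectra make up the spectrum of $A$. So one may assume from the start that $\mu_k\nu_k<0$ for every $k$.

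In this nondegenerate situation, set $t_k:=\sqrt{-\mu_k\nu_k}>0$ and define a complex diagonal matrix $D=\mathrm{diag}(d_1,\dots,d_N)$ by $d_1:=1$ and the recursion $d_{k+1}/d_k:=it_k/\mu_k$. A direct computation then shows that the conjugate $M:=D^{-1}AD$ is tridiagonal with the same real, positive diagonal entries $\delta_k$ as $A$, but with \emph{both} off-diagonal entries $M_{k,k+1}$ and $M_{k+1,k}$ equal to $it_k$. Equivalently,
\[
M=\mathrm{diag}(\delta_1,\dots,\delta_N)+iS,
\]
where $S$ is the real symmetric tridiagonal matrix with zero diagonal and off-diagonals $t_k$, so the first summand is the Hermitian part and $iS$ the anti-Hermitian part of $M$. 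Since $A$ and $M$ share the same spectrum, any eigenpair $Mv=\lambda v$ with $\|v\|=1$ yields
\[
\lambda = v^*Mv = \sum_{k=1}^N \delta_k|v_k|^2 + i\,v^*Sv,
\]
where $v^*Sv$ is real because $S$ is real symmetric. Hence $\Re(\lambda)=\sum_k \delta_k|v_k|^2\ge\mu$, and $\lambda\notin T$.

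I do not foresee any substantial obstacle to executing this plan: the inductive reduction is routine bookkeeping, and the core similarity-plus-numerical-range step is elementary. It is worth noting that this argument actually proves the stronger fact that $A$ has no eigenvalue in the whole half-plane $\{\Re(z)<\mu\}$, independently of $\delta$; the specific sectorial shape of $T$ and the explicit bound $\delta\le (\pi/2)(\sum_k \delta_k^{-1})^{-1}$ presumably serve later quantitative purposes in the paper, for instance, ensuring that the inclusion of the spectrum in the complement of $T$ passes robustly to the limit via the truncation procedure of Lemma~\ref{lem:jacobi_spectrum}.
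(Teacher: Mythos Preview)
Your argument is correct and takes a genuinely different route from the paper. The paper argues by tracking the characteristic polynomials $P_k(z)$ of the principal minors: it introduces angles $\theta_k$ with $\tan\theta_k=\delta/\delta_k$, uses the hypothesis on $\delta$ to ensure $\sum_k\theta_k\le\pi/2$, and shows inductively via the three-term recurrence $P_{k+2}(z)=(\delta_{k+2}-z)P_{k+1}(z)-\mu_{k+1}\nu_{k+1}P_k(z)$ that $P_k(T)$ lies in the open sector $S(\theta_1+\dots+\theta_k)$, so in particular $P_N$ has no zeros in $T$. Your approach replaces all of this by a single complex diagonal similarity that turns $A$ into $\mathrm{diag}(\delta_k)+iS$ with $S$ real symmetric, after which the numerical-range identity $\Re(\lambda)=\sum_k\delta_k|v_k|^2\ge\mu$ is immediate. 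The reduction you give for the degenerate case $\mu_k\nu_k=0$ (block-triangular splitting plus induction on $N$) is routine and correct.

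Two points worth making explicit. First, as you note, your argument actually yields the stronger conclusion that every eigenvalue of $A$ lies in the closed half-plane $\{\Re(z)\ge\mu\}$; the parameter $\delta$ and the sectorial shape of $T$ play no role. For the application in the paper this is more than enough: one gets $\Re(E_N)\ge 1$ uniformly in $N$, hence $\Re(E)\ge 1$ in the limit, and since $E$ is real (it is an eigenvalue of a self-adjoint operator) this gives $E\ge 1$ directly, without invoking the openness of $T$. Second, the paper's sector bookkeeping is tailored to the specific wedge $T$ and genuinely uses the bound on $\delta$ to keep the cumulative angle at most $\pi/2$; your method sidesteps this entirely by exploiting the sign condition $\mu_k\nu_k\le 0$ to produce a purely imaginary off-diagonal, which is the key conceptual simplification.
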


\begin{proof} Assume that $N\geq 2$ to avoid trivialities. Denote by $S(\theta)$ ($\theta\in(0,\pi]$) the sector consisting of nonzero complex numbers whose argument has modulus $<\theta$ (use the principal determination of the argument). Let $\theta_k\in(0,\pi/2)$ be the unique angle such that \[
	\tan \theta_k=\frac{\delta}{\delta_k}.\]
	By the convexity of the tangent function, we have \begin{equation}\label{sumtheta}
		\sum_k\theta_k\leq \sum_k\tan\theta_k = \delta\sum_k\delta_k^{-1}\leq\frac{\pi}{2},
	\end{equation}
	by our choice of $\delta$. 
	
	Let now $A_k$ be the $k$-th principal minor of $A$, and let $P_k(z)=\det(A_k-z\mathbb{I}_k)$ be its characteristic polynomial ($\mathbb{I}_k$ is the $k\times k$ identity matrix). We are going to show that \begin{equation}\label{claim}
		P_k(T)\subset S(\theta_1+\cdots+\theta_k)\qquad \forall k\leq N. 
	\end{equation}
	The case $k=N$ says in particular that the characteristic polynomial of $A$ has no zeros in $T$, as we wanted. 
	
	We are going to prove \eqref{claim} by induction. We use the following elementary geometric facts: 
	\begin{itemize}
		\item The image of $T$ under the map $z\mapsto \delta_k-z$ is contained in the sector $S(\theta_k)$. 
		\item Any sector $S(\theta)$ ($\theta\leq \pi$) is invariant under right translations $z\mapsto z+b$, where $b\geq 0$, and dilations $z\mapsto az$, where $a>0$.
		\item If $z_j\in S(\theta_j)$ ($j=1,2$) and $\theta_1+\theta_2\leq \pi$, then $z_1z_2\in S(\theta_1+\theta_2)$. 	
		\item If $z_j\in S(\theta)$ ($j=1,2$) and $\theta\leq \frac{\pi}{2}$, then $z_1+z_2\in S(\theta)$. 
	\end{itemize}
	
	Computing explicitly, we see that \[
	P_1(z)=\delta_1-z, \quad P_2(z)=(\delta_1-z)(\delta_2-z)-\mu_1\nu_1. 
	\]
	Using the properties above and the assumption $\mu_1\nu_1\leq 0$, we see that $P_1(T)\subset S(\theta_1)$ and $P_2(T)\subset S(\theta_1+\theta_2)$, that is, \eqref{claim} for $k=1$ and $2$. Expanding the determinant defining $P_{k+2}$ along the last row, we find the recurrence formula\[
	P_{k+2}(z)=(\lambda_{k+2}-z)P_{k+1}(z)-\mu_{k+1}\nu_{k+1}P_k(z).
	\] 
	Assuming that \eqref{claim} has been proved for $k$ and $k+1$ and using again the elementary properties above, we see that if $z\in T$, then $P_{k+2}(z)$ is the sum of an element of $S(\theta_1+\cdots+\theta_{k+2})$ and an element that is either zero (if $\mu_{k+1}\nu_{k+1}=0$) or in $S(\theta_1+\cdots+\theta_k)$. Notice how we used the assumption on the off-diagonal elements of $A$ and \eqref{sumtheta}. The induction is now complete. 
\end{proof}

The proof can now be easily concluded. By Lemma \ref{lem:jacobi_spectrum}, every eigenvalue $E$ is the limit of a sequence $\{E_N\}_{N\geq 1}$, where each $E_N$ is an eigenvalue of $\mathcal{I}_N(a)$. By Lemma \ref{lem:tridiag_matrix} applied to $\mathcal{I}_N(a)$ and $\delta=\frac{3}{\pi}$ (all that really matters is that this quantity is uniform in $N$), we see that $E_N\notin  T$, where $T$ is as in the statement of Lemma \ref{lem:tridiag_matrix}. Since $T$ is open, we must necessarily have $E\geq1$. The proof of Theorem \ref{thm:1} is now complete.

\end{document}